\newtheorem{thm}{Theorem}
\newtheorem{cor}{Corollary}
\newtheorem{prop}{Proposition}
\newtheorem{lemma}{Lemma}
\newtheorem{define}{Definition}
\newtheorem*{note}{Note}
\newtheorem{conj}{Conjecture}
\def\gst{\sliding{$g_4$}}
\def\g{\sliding{$g_c$}}
\def\THg{\underrightarrow{g_c}}
\def\THgst{\underrightarrow{g_4}}
\def\threefive{xT_{2,3} + yT_{2,5}}
\def\Cq{\mathcal{C}_{\mathbb{Q}}}
\def\jump{\mathrm{j}}
\def\mult{\mathrm{mult}}
\title{The Stable Concordance Genus}
\author{M. Kate Kearney}
\begin{document}

\begin{abstract} The concordance genus of a knot is the least genus of any knot in its concordance class.  Although difficult to compute, it is a useful invariant that highlights the distinction between the three--genus and four--genus.  In this paper we define and discuss the stable concordance genus of a knot, which describes the behavior of the concordance genus under connected sum.   \end{abstract}

\maketitle

\section{Introduction}

	The concordance genus of a knot, $g_c$, is the least three-genus of a knot concordant to the knot.  That is, $g_c(K) = min\{g_3(J) | J \sim K\}$.  The concordance genus is bounded below by the four-genus and bounded above by the three-genus.  This makes the concordance genus a valuable tool to describe the difference between these invariants.  In simple cases the concordance genus is not difficult to calculate, since there are a variety of algebraic tools that give bounds for the concordance genus.  In \cite{KK:11Crossing}, the author has given the calculation of the concordance genus for all but 19 of the 552 prime 11--crossing knots.  Unfortunately, as the crossing number increases, it becomes increasingly difficult to find concordances.  In order to study a broader picture of the behavior of the concordance genus, one then considers knots which are not prime.  The stable concordance genus, defined in this paper, describes the behavior of the concordance genus of a given knot under connect sum.  Section one of this paper will give the definition and basic properties of the stable concordance genus.  The remainder of the paper will discuss examples of calculations.  In particular, a formula is given for sums of certain $(2,n)$ torus knots.  Finally, we discuss applications to the study of concordance, as well as open questions related to the stable concordance genus.  

{\sl Acknowledgements}  Gratitude is owed to Chuck Livingston and Pat Gilmer for many helpful conversations on this topic.	
	
\section{Definition and Properties}

	The stable four-genus, $\gst$, is discussed by Livingston in \cite{L:Stable4Genus} (notated as $g_{st}$ in Livingston's work).  It is defined as $\gst(K) := \displaystyle\lim_{n\rightarrow\infty} \frac{g_4(nK)}{n} $, where $g_4$ is the four-genus of the knot.  We define the stable concordance genus, $\g$, similarly in terms of $g_c$, the concordance genus.

\begin{define}
	 $\THg(K) := \displaystyle\lim_{n\rightarrow\infty}\frac{g_c(nK)}{n} .$
\end{define}

That this is well-defined is an immediate consequence of Theorem 1 of Livingston's paper \cite{L:Stable4Genus}, but we also include a direct proof here.

\begin{prop}
The stable concordance genus is well-defined.
\end{prop}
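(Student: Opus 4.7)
The plan is to verify the two hypotheses of Fekete's subadditive lemma for the sequence $a_n := g_c(nK)$: namely that $a_n$ is non-negative and that $a_{m+n} \leq a_m + a_n$. Once subadditivity is in hand, Fekete's lemma guarantees that $\lim_{n\to\infty} a_n/n$ exists and equals $\inf_n a_n/n$, and finiteness of the limit will be immediate from $a_n \leq n \cdot g_c(K)$, which follows by induction from the subadditivity step with $m=1$.

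The first step, non-negativity, is trivial since $g_c$ takes values in the non-negative integers. The substantive step is subadditivity. Here I would pick, for fixed $m$ and $n$, knots $J_1 \sim mK$ and $J_2 \sim nK$ realizing $g_3(J_1) = g_c(mK)$ and $g_3(J_2) = g_c(nK)$. Since concordance respects connected sum, $J_1 \# J_2 \sim mK \# nK = (m+n)K$, so $J_1 \# J_2$ is a legitimate candidate for the concordance class of $(m+n)K$. Combining this with the additivity of the three-genus under connected sum, $g_3(J_1 \# J_2) = g_3(J_1) + g_3(J_2)$, yields
\[
g_c((m+n)K) \leq g_3(J_1 \# J_2) = g_c(mK) + g_c(nK),
\]
which is precisely the required inequality $a_{m+n} \leq a_m + a_n$.

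The main (mild) obstacle is the appeal to additivity of the three-genus under connected sum; this is a classical result of Schubert, so I would simply cite it rather than reprove it. With subadditivity established, I would invoke Fekete's lemma in its standard form: for any non-negative subadditive sequence $(a_n)$, the limit $\lim_{n\to\infty} a_n/n$ exists, is finite, and equals $\inf_n a_n/n$. Applying this to $a_n = g_c(nK)$ completes the proof. If desired, one could give the short self-contained argument for Fekete's lemma — writing $n = qm + r$ with $0 \leq r < m$, using $a_n \leq q\,a_m + a_r$, dividing by $n$, and letting $n \to \infty$ to obtain $\limsup_n a_n/n \leq a_m/m$ for every $m$, hence $\limsup \leq \liminf$ — but this is routine and can be omitted.
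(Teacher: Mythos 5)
Your proposal is correct and follows essentially the same route as the paper: the paper's proof is precisely an inline verification of Fekete's subadditive lemma for the sequence $g_c(nK)$, using the decomposition $n = aN + b$ that you sketch at the end. The only cosmetic differences are that you justify subadditivity of $g_c$ explicitly via minimizing representatives and (sub)additivity of $g_3$, where the paper merely asserts it, and that you cite Fekete's lemma by name rather than reproving it.
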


\begin{proof}
We first observe that $g_c(K)$ is subadditive (that is, for all $K$ and $J$, $g_c(K \# J) \leq g_c(K) + g_c(J)$).  In particular, $g_c(nK) \leq ng_c(K)$ for all $n$, and for all $n$ and $m$, $$\frac{g_c(nmK)}{nm} \leq \frac{ng_c(mK)}{nm} = \frac{g_c(mK)}{m}.$$ 

Furthermore, $g_c(K)$ is non-negative, and hence bounded below.  Let $L$ be the greatest lower bound of $\{ \frac{g_c(nK)}{n}\}_{n \in \mathbb{Z}_+}$.  Then for all $n$, $\frac{g_c(nK)}{n} > L$, and for any $\epsilon >0$ there is an $N$ such that $\frac{g_c(NK)}{N} \leq L + \frac{\epsilon}{2}$.  For any $n$, we may write $n=aN + b$, where $0 \leq b < N$.  Let $B = \max\{g_c(bK)\}_{0 \leq b < N}$.  Then for each $0 \leq b < N$, $g_c(bK) \leq B$.  By subadditivity, $g_c(nK) \leq ag_c(NK) + g_c(bK)$, so 
$$\frac{g_c(nK)}{n} \leq \frac{ag_c(NK)}{aN+b} + \frac{g_c(bK)}{aN+b} \leq \frac{g_c(NK)}{N} + \frac{B}{aN}.$$  

Then if $n$ is large enough such that $\frac{B}{aN} \leq \frac{\epsilon}{2}$ (and as mentioned above, we have $\frac{g_c(NK)}{N} \leq L + \frac{\epsilon}{2}$), then 
$$L \leq \frac{g_c(nK)}{n} \leq \frac{g_c(NK)}{N} + \frac{B}{aN} \leq L + \frac{\epsilon}{2} + \frac{\epsilon}{2} = L + \epsilon.$$ 
So $\displaystyle\lim_{n \rightarrow \infty} \frac{g_c(nK)}{n}$ exists and is equal to $L$.
\end{proof}

By a similar argument, we see that the stable concordance genus is multiplicative: 
 $$\g(mK) = \lim_{n \rightarrow \infty} \frac{g_c(nmK)}{n} = \lim_{nm \rightarrow \infty} \frac{g_c(nmK)}{\frac{nm}{m}} = m \lim_{nm \rightarrow \infty} \frac{g_c(nmK)}{nm} = m \g(K).$$
The stable concordance genus is first defined for knots, but any two concordant knots have the same concordance genus, and consequently the same stable concordance genus, so we can consider $\g$ to be a function on the concordance group, $\mathcal{C}$.  We can extend $\g$, by multiplicativity, to be defined on $\Cq = \mathcal{C} \otimes \mathbb{Q}$.  

Since the concordance genus is subadditive, the stable concordance genus is also subadditive.  Although it is not strictly positive, $\g$ is at least non-negative.  Hence the stable concordance genus (like the stable four-genus \cite{L:Stable4Genus}) is a seminorm.  That is, it is a non-negative function which is multiplicative and subadditive.  Consequently, $\g$ satisfies a triangle inequality.  

We aim to understand $\g$ by looking at its unit ball, $B_{stc} = \{K \in \mathcal{C} | \g(K) = 1\}$ (similarly $B_{st4}$, the unit ball for the stable four-genus) and particularly the restriction to two-dimensional subspaces.  We will explore some basic examples of computation, with the goal of finding the unit ball of collections of knots of the form $xT_{2,n} + yT_{2,m}$.

To prepare to calculate $\g$ for basic examples, we first observe several properties of the invariant.  Detailed definitions of the Alexander polynomial and classical knot signature can be found in many sources, such as \cite{Rolfsen}, and as such are omitted here.  Instead we simply observe several useful properties.
\begin{itemize}
\item $g_3 \geq g_c \geq g_4 \geq \frac{1}{2}|\sigma|$. 
\item Consequently, $\g \geq \gst \geq \frac{1}{2}|\sigma|$ (recall that $\sigma(K \# J) = \sigma(K) + \sigma(J)$).
\item  The same inequalities hold for the Tristram-Levine signatures, so in fact $\g \geq \frac{1}{2}|\sigma_t(K)|$ for $t \in [0,1]$.
\item Further, $B_{stc} \subset B_{st4} \subset B_{\sigma}$ (where $B_{\sigma}$ is the region in which the Tristram-Levine signatures all have values of two or less).
\item $g_3(K) \geq \frac{1}{2}\deg(\Delta_K(t))$, the degree of the Alexander polynomial of $K$.  
\end{itemize}
Note that the Tristram-Levine signature is defined to be $\sigma_t(K) = \lim_{\epsilon \rightarrow 0} \frac{1}{2}(\sigma'_{t-\epsilon}(K) + \sigma'_{t+\epsilon}(K))$ where $\sigma'_t(K)=\text{signature}((1-e^{2\pi i t})V+(1-e^{-2\pi i t})V^T)$ and $V$ is a Seifert matrix for $K$.

\begin{thm}[Fox-Milnor]
If K is slice, then $\Delta_K(t) = f(t)f(t^{-1})$ for some polynomial $f(t)$. \cite{FM:CobordismOfKnots}
\end{thm}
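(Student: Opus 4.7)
The plan is to derive the factorization from a Seifert matrix in block form, where the block structure comes from a metabolizer produced by the slice disk. First I would choose a Seifert surface $F \subset S^3$ of genus $g$ for $K$, let $D \subset B^4$ be a slice disk for $K$, and push $F$ slightly into the interior of $B^4$ so that $F \cup D$ is a closed, orientable, embedded surface of genus $g$ in $B^4$.

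The key step is to produce a rank-$g$ subgroup $H \subset H_1(F;\mathbb{Z}) \cong \mathbb{Z}^{2g}$ on which the Seifert pairing vanishes (a \emph{metabolizer}). The natural candidate is $H := \ker\bigl(H_1(F) \to H_1(B^4 \setminus \nu(D))\bigr)$. I would verify that $H$ has rank exactly $g$ via a half-lives, half-dies argument (Alexander duality in $B^4$ applied to the long exact sequence of the pair $(B^4 \setminus \nu(D), F)$), and then show that the Seifert pairing vanishes on $H$ by representing $\mathrm{lk}(\alpha,\beta^+)$ as the intersection number of two $2$-chains in $B^4 \setminus \nu(D)$ bounded by $\alpha$ and a pushoff of $\beta$, which must vanish for dimensional reasons in a $4$-manifold. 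I expect this to be the main obstacle of the argument, since it packages all of the geometric content of sliceness into a single algebraic fact; the remainder is linear algebra.

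Once the metabolizer is in hand, choosing a basis of $H_1(F)$ whose first $g$ elements span $H$ puts the Seifert matrix into block form
\[
V = \begin{pmatrix} 0 & A \\ B & C \end{pmatrix},
\]
with $A$, $B$, $C$ each $g \times g$. Then $tV - V^T$ has a $g \times g$ zero block in the upper left, and expanding its determinant along that block gives
\[
\Delta_K(t) \;\doteq\; \det(tA - B^T)\,\det(tB - A^T),
\]
where $\doteq$ denotes equality up to multiplication by units $\pm t^k$ in $\mathbb{Z}[t,t^{-1}]$. Setting $f(t) := \det(tA - B^T)$, the identity $f(t^{-1}) = \det(t^{-1}A - B^T) = t^{-g}\det(A - tB^T) \doteq \det\bigl((tB - A^T)^T\bigr) = \det(tB - A^T)$ shows the two factors are interchanged by $t \leftrightarrow t^{-1}$. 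Absorbing the remaining unit into $f$ yields $\Delta_K(t) = f(t) f(t^{-1})$, as claimed.
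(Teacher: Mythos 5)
The paper offers no proof of this statement --- it is quoted with a citation to Fox--Milnor --- so your argument has to stand on its own, and the linear algebra in its second half does: a Seifert matrix with a $g\times g$ zero block gives $\Delta_K(t) \doteq \det(tA-B^T)\det(tB-A^T)$, and the two factors are indeed exchanged by $t\leftrightarrow t^{-1}$. The gap is in the step you yourself flag as carrying all the geometric content. Your candidate metabolizer $H=\ker\bigl(H_1(F)\to H_1(B^4\setminus\nu(D))\bigr)$ is all of $H_1(F)$, not a rank-$g$ subgroup: $H_1(B^4\setminus\nu(D))\cong\mathbb{Z}$ is generated by a meridian of $D$ and classes are detected by linking number with $K$, and every $1$-cycle on a Seifert surface has linking number zero with $K$ (it misses a parallel pushoff of $F$), so the map is identically zero. ``Half lives, half dies'' does not apply to the pair $(B^4\setminus\nu(D),F)$, because $F$ is not the boundary of that $4$-manifold. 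Separately, the vanishing of $\mathrm{lk}(\alpha,\beta^+)$ cannot be ``for dimensional reasons'': two $2$-chains in a $4$-manifold meet generically in isolated points ($2+2=4$), which is precisely why intersection numbers of surfaces in $4$-manifolds carry information.

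The standard repair uses the closed genus-$g$ surface $\hat F = F\cup D$ that you construct and then never use. Since $[\hat F]=0$ in $H_2(B^4)$, one can find a compact oriented $3$-manifold $R\subset B^4$ with $\partial R=\hat F$ (the preimage of a regular value of a map $B^4\setminus\nu(\hat F)\to S^1$ extending the projection $\hat F\times S^1\to S^1$ on the boundary of a tubular neighborhood). Poincar\'e--Lefschetz duality for the pair $(R,\hat F)$ shows $\ker\bigl(H_1(\hat F;\mathbb{Q})\to H_1(R;\mathbb{Q})\bigr)$ has rank $g$, and $H_1(\hat F)\cong H_1(F)$. For $\alpha,\beta$ in this kernel, choose $2$-chains $A, B\subset R$ with $\partial A=\alpha$, $\partial B=\beta$; pushing $B$ off in the positive normal direction to $R$ yields a $2$-chain $B^+$ with $\partial B^+=\beta^+$ disjoint from $A$, so $\mathrm{lk}(\alpha,\beta^+)=A\cdot B^+=0$. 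With that substitution your argument becomes the classical proof. One last nit: the concluding unit $\pm t^k$ cannot be ``absorbed into $f$,'' since $f(t)f(t^{-1})$ is unchanged when $f$ is multiplied by a unit; the equality holds up to units, with the sign pinned down by evaluating at $t=1$.
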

As a consequence, if K is concordant to J, then $\Delta_K(t)\Delta_J(t) = f(t)f(t^{-1})$ for some polynomial $f(t)$.  
So if we can write $\Delta_K(t)=f(t)f(t^{-1})g(t)$, for some polynomials $f(t)$ and $g(t)$, where $g(t)$ has no factors of the form $h(t)h(t^{-1})$, then we may conclude $g_c(K) \geq \frac{1}{2}\deg(g(t))$.  In Section 3 we observe that in conjunction with jumps in the signature function, we can use this lower bound for the concordance genus to also bound $\g$ from below.

\section{Preliminary Examples}

To begin exploration of $\g$ we calculate values for prime knots with eight crossings or fewer.  In the following discussion, values of classical invariants including signature and Alexander polynomial are as given on KnotInfo \cite{knotinfo}.

\bigskip
$\mathbf{3_1}$:  As observed in the previous section, since $g_c(nK) \geq \frac{1}{2}|\sigma(nK)|$ for all $n$, we know $\g(K) \geq \frac{1}{2}|\sigma(K)|$.  We check, $|\sigma(3_1)| = 2$, so this gives us $\g(3_1) \geq 1$.  On the other hand, $g_c(n3_1) \leq ng_c(3_1)$ for all $n$, so $g_c(n3_1) \leq n$ for all $n$.
$$1 \leq \g(3_1) = \lim_{n\rightarrow\infty} \frac{g_c(n3_1)}{n} \leq \lim_{n\rightarrow\infty} \frac{n}{n} = 1$$
So $\g(3_1) = 1$.
\bigskip

Generalizing the basic algebra used in the previous calculation, we observe the following:

\begin{prop}
The stable concordance genus is bounded above by the concordance genus.
\end{prop}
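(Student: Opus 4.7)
The plan is to mimic exactly the final chain of inequalities used in the $3_1$ example, but in full generality. The only ingredient needed is the subadditivity of $g_c$, which was recorded (and used) in the proof of well-definedness of $\THg$.

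First I would fix an arbitrary knot $K$ and recall that subadditivity of the concordance genus under connected sum gives $g_c(nK) \leq n\, g_c(K)$ for every positive integer $n$. Dividing by $n$ yields
\[
\frac{g_c(nK)}{n} \leq g_c(K)
\]
for all $n \geq 1$.

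Next I would take the limit as $n \to \infty$. Since the limit defining $\THg(K)$ exists by the preceding proposition, and since the inequality above holds for every $n$, the limit inherits the bound:
\[
\THg(K) = \lim_{n \to \infty} \frac{g_c(nK)}{n} \leq g_c(K).
\]

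There is no real obstacle here; the statement is essentially a restatement of the first step in the $3_1$ computation. The only thing to be careful about is that we are invoking a property already established (subadditivity of $g_c$, noted at the start of the proof of well-definedness), so nothing new is needed. One could optionally remark that the analogous bound $\THg(K) \leq \frac{g_c(nK)}{n}$ holds for every $n$, which refines the statement, but this is not required for the proposition as stated.
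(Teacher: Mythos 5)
Your proof is correct and matches the paper's (implicit) argument exactly: the proposition is stated as a generalization of the $3_1$ computation, which uses precisely the subadditivity bound $g_c(nK) \leq n\,g_c(K)$ followed by dividing by $n$ and passing to the limit. Nothing further is needed.
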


\begin{prop}\label{signprop}
If $\frac{1}{2}|\sigma(K)| = g_c(K)$, then $\THg(K) = g_c(K) = \frac{1}{2}|\sigma(K)|$.
\end{prop}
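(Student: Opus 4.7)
The plan is to use a squeeze argument that directly parallels the $3_1$ calculation just carried out, upgraded to arbitrary $K$ via the general properties already listed in the excerpt.

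First I would record the upper bound. Since the concordance genus is subadditive, $g_c(nK) \leq n \, g_c(K)$ for every $n$, and dividing by $n$ and taking the limit gives $\THg(K) \leq g_c(K)$. This is precisely the content of the preceding proposition (``The stable concordance genus is bounded above by the concordance genus''), so I would either invoke it or spell out this one-line argument.

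Next I would record the lower bound. From the bullet list, $g_c(J) \geq \tfrac{1}{2}|\sigma(J)|$ for every knot $J$, and the classical signature is additive under connected sum, so
\[
g_c(nK) \;\geq\; \tfrac{1}{2}|\sigma(nK)| \;=\; \tfrac{n}{2}|\sigma(K)|.
\]
Dividing by $n$ and passing to the limit yields $\THg(K) \geq \tfrac{1}{2}|\sigma(K)|$.

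Finally I would combine these with the hypothesis $g_c(K) = \tfrac{1}{2}|\sigma(K)|$ to get the squeeze
\[
\tfrac{1}{2}|\sigma(K)| \;\leq\; \THg(K) \;\leq\; g_c(K) \;=\; \tfrac{1}{2}|\sigma(K)|,
\]
forcing equality throughout and proving the proposition. There is no real obstacle here; the statement is essentially a formal consequence of the inequality chain $\tfrac{1}{2}|\sigma| \leq \THg \leq g_c$ already in hand, and the only thing to be careful about is citing the additivity of $\sigma$ under connected sum when passing from $\sigma(K)$ to $\sigma(nK)$, which was noted parenthetically in the bullet list above.
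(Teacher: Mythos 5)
Your proof is correct and matches the paper's intended argument: the paper gives no separate proof, noting only that the proposition follows by ``generalizing the basic algebra used in the previous calculation'' for $3_1$, which is exactly your squeeze $\tfrac{1}{2}|\sigma(K)| \leq \THg(K) \leq g_c(K)$. Nothing further is needed.
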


Proposition \ref{signprop} applies to 15 other prime knots of eight or fewer crossings: $$5_1, 5_2, 6_1, 7_2, 7_3, 7_4, 7_5, 8_8, 8_9, 8_{10}, 8_{11}, 8_{15}, 8_{19}, 8_{20}.$$  This includes the slice knots.  In fact, as a special case of Proposition \ref{signprop}, all slice knots have stable genus zero.

\begin{cor}
If $K$ is slice, $K$ is stably slice (that is, $\THg = \THgst = 0$).
\end{cor}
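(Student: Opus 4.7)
The plan is to reduce the corollary to Proposition \ref{signprop}, which was just proved. First I would verify the two hypotheses of that proposition for a slice knot $K$: namely that $g_c(K) = 0$ and that $\sigma(K) = 0$. The first holds because $K$ is slice means $K$ is concordant to the unknot $U$, so $g_c(K) \leq g_3(U) = 0$, and $g_c$ is non-negative. The second follows from the chain of inequalities listed among the bulleted properties before the Fox--Milnor theorem, namely $g_4 \geq \tfrac{1}{2}|\sigma|$; since $K$ is slice, $g_4(K) = 0$, forcing $\sigma(K) = 0$.

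With these two facts in hand, $\tfrac{1}{2}|\sigma(K)| = 0 = g_c(K)$, so Proposition \ref{signprop} applies directly and yields $\THg(K) = 0$. For the stable four-genus statement, I would invoke the inequality $\THg \geq \THgst$ (noted immediately after Proposition \ref{signprop}'s preamble in the bulleted list, coming from $g_c \geq g_4$), together with non-negativity of $\THgst$, to conclude $\THgst(K) = 0$ as well.

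There is essentially no obstacle here: the corollary is a packaging of Proposition \ref{signprop} in the special case where both quantities vanish, combined with the sandwich $0 \leq \THgst \leq \THg$. An alternative, even quicker route would be to observe that the slice knots form a subgroup of $\mathcal{C}$, so $nK$ is slice for every $n$, giving $g_c(nK) = 0$ and $g_4(nK) = 0$ term-by-term in the defining limits; but phrasing the argument through Proposition \ref{signprop} keeps the exposition consistent with the surrounding discussion of that proposition's applications.
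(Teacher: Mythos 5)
Your proposal is correct and matches the paper's approach: the paper presents this corollary precisely as a special case of Proposition \ref{signprop}, since a slice knot has $g_c(K) = 0 = \tfrac{1}{2}|\sigma(K)|$, with the vanishing of $\THgst$ following from $0 \leq \THgst \leq \THg$. Your verification of the hypotheses and the remark about the alternative term-by-term argument are both sound.
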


\bigskip
$\mathbf{4_1}$:  The figure eight knot is amphichiral, so $g_4(2*4_1) = g_4(4_1 \# -4_1) = 0$.  Therefore $g_4(2n4_1) = g_c(2n4_1)= 0$.  Using the fact that the stable concordance genus is well-defined, 
$$\g(4_1) = \lim_{n\rightarrow\infty}\frac{g_c(n4_1)}{n} = \lim_{k\rightarrow\infty}\frac{g_c(2k4_1)}{2k} = \lim_{k\rightarrow\infty}0 = 0$$
So $\g(4_1) = 0$.  Since $g_c(4_1) = 1$, this is an example for which $\g(K) \neq g_c(K)$.
\bigskip

In fact, we can use the same technique to see that the stable concordance genus vanishes for all knots for which $g_c(nK) = 0$ for some $n$.  This is exactly the knots of finite order in $\mathcal{C}$.

\begin{prop}
Any knot which has finite order in $\mathcal{C}$ is stably slice.  In particular, amphichiral knots are stably slice.
\end{prop}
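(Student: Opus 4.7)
The plan is to reduce to the $4_1$ calculation already presented, by combining subadditivity of $g_c$ with the well-definedness of the limit proved in Proposition 1. Suppose $K$ has finite order in $\mathcal{C}$, so that $nK$ is slice for some positive integer $n$. Then $g_c(nK) = 0$.

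First I would apply subadditivity along multiples of $n$: for every positive integer $k$, $g_c(knK) \leq k \, g_c(nK) = 0$, and since $g_c \geq 0$ this forces $g_c(knK) = 0$. Hence the sequence $g_c(mK)/m$ admits a subsequence (the one indexed by $m = kn$) that is identically zero. Proposition 1 guarantees that the full limit $\lim_{m \to \infty} g_c(mK)/m$ exists, so it must agree with this subsequential limit, giving $\THg(K) = 0$. Because $g_4 \leq g_c$ termwise and $\THgst \geq 0$, this immediately forces $\THgst(K) = 0$ as well, so $K$ is stably slice in the sense of the preceding corollary.

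For the second sentence of the proposition, I would observe that an amphichiral knot $K$ is its own inverse in the concordance group, i.e.\ $K = -K$ in $\mathcal{C}$; this is precisely the identification already used in the $4_1$ computation, where $2 \cdot 4_1$ was replaced by $4_1 \# -4_1$. Since $K \# -K$ is always slice, this yields $2K = 0$ in $\mathcal{C}$, so $K$ has order dividing two, and the argument of the previous paragraph applies.

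I do not expect a genuine obstacle here: all of the real work was done in establishing that $\THg$ is a well-defined limit, and the remainder is a one-line subadditivity bound together with the standard identification of amphichirality with a torsion relation in $\mathcal{C}$. The only point worth being precise about is which flavor of amphichirality is in play, namely the negative/concordance sense $K = -K$ in $\mathcal{C}$, which is the convention used implicitly in the $4_1$ example.
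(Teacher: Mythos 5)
Your proof is correct and matches the paper's approach: the paper derives this proposition from exactly the same template as its $4_1$ computation, namely that $g_c(knK)=0$ along the subsequence of multiples of $n$, combined with the well-definedness of the limit from Proposition 1. Your extra care about which notion of amphichirality gives $2K=0$ in $\mathcal{C}$ is a reasonable precision that the paper glosses over.
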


For prime knots of eight or fewer crossings, this applies to $$6_3, 8_3, 8_{12}, 8_{17}, 8_{18}$$
as well as several of the previously mentioned knots including the slice knots.

\bigskip
$\mathbf{6_2}$:  We begin by checking the signature and Alexander polynomial.  The signature is $\sigma(6_2) = -2$, and $\Delta_{6_2}(t) = 1-3t+3t^2-3t^3+t^4$, which is irreducible in $\mathbb{Z}[t,t^{-1}]$.  The concordance genus is $g_c(6_2) = 2$.  So we have $1 = \frac{1}{2}|\sigma(K)| \leq \g(6_2) \leq g_c(6_2) = 2$.  The Tristram-Levine signature jumps at the two complex roots of $\Delta_{6_2}(t)$, $\alpha$ and $\overline{\alpha}$ by two.  While we cannot get a stronger bound directly from the Tristram-Levine signatures, we can use the jump function to show that half the degree of the Alexander polynomial of $6_2$ (or in similar cases, a factor of the Alexander polynomial) does bound $\g$.  We'll pause our calculation to discuss Proposition \ref{jumpprop} and it's consequences.

Let $\jump_{\rho}(K)$ denote the jump in the signature function of $K$ at $\rho$.  The following lemma is the key ingredient to proving Proposition \ref{jumpprop}, as we will see below.  This lemma has been stated by Garoufalidis \cite{G:JonesSignature}, although a complete proof is not given in the literature.

\begin{lemma}\label{jumplemma}
If $\rho$ is a root of the Alexander polynomial on $S^1$, then
$|\jump_{\rho}(K)|=2 \, a_{\rho}$, where 
\begin{enumerate}[(a)]
\item  $a_{\rho}$ is an integer 
\item  $a_{\rho} \leq \mult(\rho,\Delta_K(t))$, where $\mult(\rho,\Delta_K(t))$ is the
multiplicity of $\rho$ in $\Delta_K(t)$, and
\item   $a_{\rho} \equiv \mult(\rho,\Delta_K(t)) \bmod 2$.
\end{enumerate}
Moreover, $\jump_{\rho}(K)=-\jump_{\bar{\rho}}(K)$.
\end{lemma}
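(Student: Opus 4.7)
I would read off the jumps of the Tristram--Levine signature directly from the Hermitian family $B(\omega) := (1-\omega)V + (1-\bar\omega)V^T$ on $\omega \in S^1$, by locating its degeneracies and analyzing how eigenvalues of $B(\omega)$ cross zero there.

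\emph{Step 1: Locate the jumps.} Using $(1-\omega)\bar\omega = \bar\omega - 1$ on $|\omega|=1$, one checks that $B(\omega) = (1-\omega)(V - \bar\omega V^T)$, and hence $\det B(\omega) = (1-\omega)^n \Delta_K(\bar\omega)$ up to the unit convention defining $\Delta_K$. Because the signature of a Hermitian matrix is locally constant on the locus where it is nonsingular, jumps of $\sigma_t$ can occur only where $\det B(\omega)$ vanishes. Aside from the isolated point $\omega=1$, these are precisely the roots of $\Delta_K$ on $S^1$; since $\Delta_K$ has integer coefficients, $\mult(\rho,\Delta_K) = \mult(\bar\rho,\Delta_K)$.

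\emph{Step 2: Analyze the crossing at $\rho$.} Fix $\rho = e^{2\pi i t_0}$ with $m := \mult(\rho, \Delta_K)$, and regard $B(e^{2\pi i t})$ as a real-analytic family in $t$. By Rellich's theorem on analytic perturbations of Hermitian matrices, the eigenvalues can be chosen as real-analytic functions $\lambda_1(t), \ldots, \lambda_n(t)$ near $t_0$. Since $\prod_i \lambda_i(t) = \det B(e^{2\pi i t})$ vanishes to order exactly $m$ at $t_0$ (the factor $(1-\omega)^n$ is nonzero there), one obtains $\sum_i \mathrm{ord}_{t_0}\lambda_i = m$. An eigenvalue whose zero at $t_0$ is of odd order changes sign across $t_0$; one of even order does not.

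\emph{Step 3: Conclude (a)--(c) and the symmetry.} Writing $\jump_\rho(K) = 2 a_\rho$, where $a_\rho$ is the signed count ($+1$ for each eigenvalue going from $-$ to $+$, and $-1$ for each going from $+$ to $-$), one sees $a_\rho \in \mathbb{Z}$ and $|a_\rho| \leq k \leq m$, where $k$ is the unsigned number of sign-changing eigenvalues; this gives (a) and (b). For (c), the parity of $k$ equals the number of odd-order vanishings among the $\lambda_i$, which has the same parity as $\sum_i \mathrm{ord}_{t_0}\lambda_i = m$; since $a_\rho \equiv k \pmod 2$, one obtains $a_\rho \equiv m \pmod 2$. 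For the ``moreover'' claim, a direct computation gives $B(\bar\omega) = B(\omega)^T$, so the two matrices share signatures; with $\omega = e^{2\pi i t}$ this reads $\sigma'_{1-t}(K) = \sigma'_t(K)$, so the signature function is symmetric about $t = 1/2$, from which $\jump_{\bar\rho}(K) = -\jump_\rho(K)$ follows by inspection of one-sided limits.

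\emph{Main obstacle.} The delicate point is Step 2: one must assemble the local analytic eigenvalue branches so that the orders of vanishing sum exactly to $m$. The parity claim (c) is most sensitive to this, since weaker bookkeeping would only give the bound (b) without the congruence, and this is precisely where the statement cited from Garoufalidis omits details. An alternative to Rellich is to factor $\det B(\omega)$ locally over $\mathbb{R}$ and track sign changes of the resulting real factors, but either route demands careful bookkeeping when several branches coalesce at a root of $\Delta_K$ of higher multiplicity.
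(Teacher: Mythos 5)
Your argument is correct, and it is a genuinely different route from the one sketched in the paper. The paper offers two approaches: Milnor's decomposition of $H^1(\tilde{X},\partial\tilde{X})$ into $[p(t)]$-primary summands (attributing a separate quadratic form to each irreducible factor of $\Delta_K$), and an algebraic diagonalization of the Hermitian form $B_t=(1-t^{-1})V+(1-t)V^{t}$ over $\mathbb{Q}(t)$ by a determinant-one change of basis chosen to keep factors of $\Delta_K$ out of the denominators, with (c) read off from the resulting diagonal entries; in both cases the details are left to the reader. You instead work analytically: the factorization $B(\omega)=(1-\omega)(V-\bar\omega V^T)$ identifies $\det B$ with $(1-\omega)^n\Delta_K(\bar\omega)$ up to units, Rellich's theorem supplies real-analytic eigenvalue branches $\lambda_i(t)$, and the identity $\sum_i\mathrm{ord}_{t_0}\lambda_i=\mult(\rho,\Delta_K)$ delivers (b) and, via the parity of the number of odd-order vanishings, the congruence (c) --- which is exactly the point your ``main obstacle'' paragraph worries about, but which your Step 2 already resolves, since the product of the analytic branches \emph{is} the determinant and no branch can vanish identically ($\Delta_K(1)=\pm1$). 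What each approach buys: the paper's primary-decomposition route attaches signature data to each irreducible factor of $\Delta_K$ over $\mathbb{Z}[t,t^{-1}]$, which is the form in which the lemma is later applied (to the jump polynomial); your eigenvalue-crossing argument is more self-contained and elementary (modulo citing Rellich), works root-by-root exactly as the lemma is stated, and makes the multiplicity bookkeeping transparent. Your derivation of the symmetry $\jump_{\bar\rho}(K)=-\jump_\rho(K)$ from $B(\bar\omega)=B(\omega)^T$ is also clean and matches the paper's convention $\sigma'_t(K)=\mathrm{signature}((1-e^{2\pi it})V+(1-e^{-2\pi it})V^T)$ exactly.
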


The most direct proof of part (b) that the author is aware of uses Milnor's definition of $\sigma_{\theta}$ signatures \cite{M:InfiniteCyclicCoverings}.  These are equivalent to the jump function defined above, as shown by Matumoto \cite{M:SignatureInvariants}.  Milnor describes how to split $H^1(\tilde{X}, \partial \tilde{X})$ into $[p(t)]$-primary summands in $\mathbb{R}[t,t^{-1}]$, and accordingly split the quadratic form, so as to separately analyze the contributions of each factor, $p(t)$, of the Alexander polynomial to the signature.  A careful analysis of the dimensions of these summands produces the desired inequality $a_{\rho} \leq \mult(\rho,\Delta_K(t))$, where $\mult(\rho,\Delta_K(t))$ is the multiplicity of $\rho$ in $\Delta_K(t)$.

An alternative proof is given by considering the $\mathbb{Q}(t)$ Hermitian form given by $B_t =  (1-t^{-1})V+(1-t)V^t$.  Notice that that $(1-t^{-1})^n\Delta_K(t) = \det(B_t)$.  The matrix $B_t$ can be diagonalized.  In particular, there is a matrix $A$ with $\det(A) = 1$ and $ABA^*$ is diagonal, where $A^*$ is the conjugate transpose.  Chosing $A$ carefully, one can insist that the diagonal matrix $ABA^*$ has rational functions on the diagonal, and avoid having factors $p(t)$ of the Alexander polynomial as denominators.  Then we can see a direct relationship between jumps in the signature function and factors of the Alexander polynomial.  An inductive proof on dimension shows that the matrix can in fact be diagonalized in such a way.  

Part (a) is an immediate consequence of the fact that knot signatures are always even.  Part (c) follows from considering the diagonalized matrix from the second proof of part (b).  The details of the proof are left to the reader.

\begin{prop}\label{jumpprop}
If a knot, $K$, has Alexander polynomial $\Delta_K(t)=f(t)^xg(t)$ and $\jump_\rho(K) = \pm 2x$ for where $f(t)$ is the minimal polynomial for $\rho$ in $\mathbb{Z}[t, t^{-1}]$, then for any $J$ concordant to $K$, $f(t)^x$ is a factor of $\Delta_J(t)$.
\end{prop}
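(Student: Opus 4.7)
The plan is to transfer the jump hypothesis from $K$ to any concordant knot $J$, and then deduce divisibility from Lemma~\ref{jumplemma}(b).

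First I would show that for any concordant pair $K \sim J$ and any $\rho \in S^1$, one has $\jump_\rho(K) = \jump_\rho(J)$. Since $K \# -J$ is slice and signatures are additive under connected sum, the classical theorem that Tristram-Levine signatures vanish on slice knots at points where the Alexander polynomial is nonzero gives $\sigma_t(K) = \sigma_t(J)$ for every $t$ such that $e^{2\pi i t}$ is not a root of $\Delta_K \Delta_J$. Alexander polynomials have only finitely many roots on $S^1$, so the two signature functions agree on some punctured neighborhood of $t_0 = \arg(\rho)/(2\pi)$, and hence their one-sided limits at $t_0$ coincide. This forces the jumps to match.

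Next I apply Lemma~\ref{jumplemma}(b) to $J$. The hypothesis gives $|\jump_\rho(K)| = 2x$, and the previous paragraph gives $|\jump_\rho(J)| = 2x$ as well. Writing $|\jump_\rho(J)| = 2a_\rho$ in the notation of the lemma, we get $a_\rho = x$, whence $x \leq \mult(\rho, \Delta_J(t))$. Because $f(t)$ is the minimal polynomial of $\rho$ over $\mathbb{Z}$ and $\Delta_J(t) \in \mathbb{Z}[t, t^{-1}]$, every Galois conjugate of $\rho$ appears in $\Delta_J$ with the same multiplicity as $\rho$, so $f(t)^x$ divides $\Delta_J(t)$.

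The main obstacle is the first paragraph. The concordance invariance of Tristram-Levine signatures at non-roots is classical, but one must reconcile the paper's averaged definition of $\sigma_t$ with the one-sided limits that actually determine $\jump_\rho$, and carefully track which points are excluded. Once the jump-matching is in hand, Lemma~\ref{jumplemma} does the real structural work and the polynomial divisibility conclusion is automatic.
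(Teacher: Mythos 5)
Your proposal is correct and follows essentially the same route as the paper: both transfer the jump at $\rho$ from $K$ to $J$ via concordance invariance of the signature function and then invoke Lemma~\ref{jumplemma}(b) to get $x \leq \mult(\rho, \Delta_J(t))$, hence $f(t)^x \mid \Delta_J(t)$. The only difference is that you spell out why the jumps agree (slice-vanishing away from roots plus one-sided limits) and why Galois conjugates share multiplicity, details the paper compresses into the phrase ``since the signature function is a concordance invariant.''
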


\begin{proof}  This is an immediate consequence of Lemma \ref{jumplemma}.

In particular, for $f(t)$ the minimal polynomial of $\rho$ with $\jump_\rho(K) = \pm 2x = \jump_\rho(J)$ (since the signature function is a concordance invariant), then $x \leq \mult(\rho, \Delta_J(t))$ and hence $f(t)^x$ is a factor of $\Delta_J(t)$.
\end{proof}

To clarify the application of this proposition, we define two new polynomials.

\begin{define}
The concordance polynomial of a knot, $K$, is the maximal degree polynomial which divides the Alexander polynomial of all knots concordant to $K$.  We will denote it $\Delta_K^c(t)$.
\end{define}

This is well-defined up to multiplication by $\pm t^k$.  Notice that $\Delta_K^c(t)$ divides $\Delta_J(t)$ for all $J \sim K$.  Since $\Delta_K^c(t)$ divides $\Delta_K(t)$ in particular, we see that $\Delta_K^c(t)$ is simply a product of the factors of $\Delta_K(t)$ which also divide each $\Delta_J(t)$ for $J \sim K$.

\begin{define}
The jump polynomial of a knot, $K$, is given by $$\Delta_K^j(t) := \prod_{f_i(t)} f_i(t)^{\jump_i(K)}$$  where $f_i$ are the irreducible factors of $\Delta_K(t) = \prod f_i(t)^{x_i(t)}$, and $$\jump_i(K) := \max \{ |\frac{1}{2} \jump_{\alpha}(K)| : \alpha \text{ is a root of }f_i(t) \}.$$
\end{define}

The following are immediate consequences of these definitions and the previous results.

\begin{prop}
The jump polynomial of $K$ divides the concordance polynomial of $K$, and both divide the Alexander polynomial of $K$.  In particular, $$\deg (\Delta_K^j(t)) \leq \deg (\Delta_K^c(t)) \leq \deg (\Delta_K(t))$$
\end{prop}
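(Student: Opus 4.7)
The plan is to prove the chain of divisibilities $\Delta_K^j(t) \mid \Delta_K^c(t) \mid \Delta_K(t)$, from which the degree inequality is immediate. The right-hand divisibility is automatic from the definition of $\Delta_K^c(t)$: it divides $\Delta_J(t)$ for every $J \sim K$, so in particular for $J = K$.

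For the left-hand divisibility, I would argue that $\Delta_K^j(t)$ divides $\Delta_J(t)$ for every $J \sim K$; the maximality in the definition of $\Delta_K^c(t)$ then yields $\Delta_K^j(t) \mid \Delta_K^c(t)$. Fix $J \sim K$ and write $\Delta_K^j(t) = \prod_i f_i(t)^{\jump_i(K)}$ with the $f_i$ distinct irreducibles in $\mathbb{Z}[t,t^{-1}]$. Since distinct irreducibles are pairwise coprime, it suffices to verify $f_i(t)^{\jump_i(K)} \mid \Delta_J(t)$ for each $i$ separately. For a given $i$, I would choose a root $\alpha$ of $f_i$ that realizes the maximum in the definition of $\jump_i(K)$, so that $\jump_i(K) = \tfrac{1}{2}|\jump_\alpha(K)|$. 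Because the signature function (and hence each individual jump $\jump_\alpha$) is a concordance invariant, we have $\jump_\alpha(J) = \jump_\alpha(K) = \pm 2\jump_i(K)$. Proposition \ref{jumpprop}, applied with $x = \jump_i(K)$ and $\rho = \alpha$, then delivers $f_i(t)^{\jump_i(K)} \mid \Delta_J(t)$.

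The one place requiring mild care is checking the hypothesis of Proposition \ref{jumpprop} for this choice of exponent: we need $f_i(t)^{\jump_i(K)}$ to actually appear as a factor of $\Delta_K(t)$, i.e., $\jump_i(K) \leq \mult(f_i,\Delta_K(t))$. This is exactly the content of Lemma \ref{jumplemma}(b) applied to $K$, so no additional work is needed. Once this is established, assembling the factor-by-factor divisibilities via coprimality yields $\Delta_K^j(t) \mid \Delta_J(t)$ for every $J \sim K$, and the proposition follows. There is no substantive obstacle here beyond organizing the definitions; the real content of the statement has already been compressed into Lemma \ref{jumplemma} and Proposition \ref{jumpprop}.
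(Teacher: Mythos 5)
Your argument is correct and is precisely the elaboration the paper has in mind: the paper states this proposition without proof as an "immediate consequence" of the definitions together with Lemma \ref{jumplemma} and Proposition \ref{jumpprop}, which is exactly the machinery you invoke (concordance invariance of the jumps, part (b) of the lemma to legitimize the exponent, and the gcd-style maximality of $\Delta_K^c(t)$ to conclude). No gaps; your write-up simply makes explicit what the paper leaves to the reader.
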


\begin{prop}
The concordance polynomial is a concordance invariant.  Furthermore, $\frac{1}{2} \deg(\Delta_K^c(t)) \leq g_c(K)$.
\end{prop}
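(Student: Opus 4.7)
The plan is to prove both assertions directly from the definitions, leveraging only the classical Seifert-genus bound $g_3(K) \geq \tfrac{1}{2}\deg(\Delta_K(t))$ that is already listed among the useful properties above.

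For the invariance claim, I would argue as follows. Suppose $K \sim K'$. Because concordance is an equivalence relation, the sets $\{J : J \sim K\}$ and $\{J : J \sim K'\}$ coincide. The concordance polynomial is by definition the maximal-degree polynomial dividing $\Delta_J(t)$ for every $J$ in this set, so it depends only on the concordance class of $K$; hence $\Delta_K^c(t) = \Delta_{K'}^c(t)$ up to the usual $\pm t^k$ ambiguity. This is essentially a tautology, so no real work is required.

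For the degree bound, I would realize $g_c(K)$ by an actual knot. Since $g_c(K) = \min\{g_3(J) : J \sim K\}$ is a minimum over a non-empty subset of $\mathbb{Z}_{\geq 0}$, there exists $J_0 \sim K$ with $g_3(J_0) = g_c(K)$. By the definition of the concordance polynomial, $\Delta_K^c(t)$ divides $\Delta_{J_0}(t)$, so $\deg(\Delta_K^c(t)) \leq \deg(\Delta_{J_0}(t))$. Applying the classical inequality $\deg(\Delta_{J_0}(t)) \leq 2g_3(J_0)$ and chaining gives
\[
\tfrac{1}{2}\deg(\Delta_K^c(t)) \;\leq\; \tfrac{1}{2}\deg(\Delta_{J_0}(t)) \;\leq\; g_3(J_0) \;=\; g_c(K),
\]
which is the desired bound.

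There is really no substantive obstacle here; the only minor point to handle cleanly is that $\Delta_K^c(t)$ is only well-defined up to multiplication by $\pm t^k$, so one should interpret $\deg$ as the span (top exponent minus bottom exponent) of a Laurent polynomial, matching the convention used in the bound $g_3(K) \geq \tfrac{1}{2}\deg(\Delta_K(t))$. With that convention fixed, both parts of the proposition follow immediately from Proposition~\ref{jumpprop}'s ambient setup and the Fox--Milnor framework, with no further input required.
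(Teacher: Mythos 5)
Your proof is correct and takes essentially the same approach as the paper: both rest on the fact that $\Delta_K^c(t)$ divides $\Delta_J(t)$ for every $J \sim K$ together with the classical bound $\tfrac{1}{2}\deg(\Delta_J(t)) \leq g_3(J)$. The only cosmetic difference is that you realize $g_c(K)$ by a specific minimizer $J_0$, whereas the paper chains the inequality through minima over the whole concordance class.
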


\begin{proof}
We observed above that $\Delta_K^c(t)$ divides $\Delta_J(t)$ for all $J \sim K$.  Consequently $\deg(\Delta_K^c(T)) \leq  \deg(\Delta_J(t))$ for all $J \sim K$, so in particular, $\deg(\Delta_K^c(T)) \leq  \min\{\deg(\Delta_J(t)): J \sim K\}$.  Then since $\frac{1}{2} \deg(\Delta_J(t)) \leq g_3(J)$ for each $J \sim K$, we have $$\frac{1}{2} \deg(\Delta_K^c(t)) \leq \min \{ \frac{1}{2} \deg(\Delta_J(t)) : J \sim K \} \leq \min \{g_3(J) : J \sim K \} = g_c(K).$$
\end{proof}

\begin{prop}\label{degreebound}
The degree of the jump polynomial is exactly the sum $$\deg \Delta_K^j(t) = \sum_i (\deg f_i(t) ) * j_i(K)$$ where $f_i(t)$ and $j_i(K)$ are as given in the definition of the jump polynomial.  Moreover, one half of this value is a lower bound for the concordance genus of $K$.
\end{prop}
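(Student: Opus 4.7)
The plan is to observe that both claims are essentially immediate consequences of the preceding two propositions together with the definitions, so the proof will amount to assembling them carefully rather than introducing new machinery.

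First, for the degree formula, I would unfold the definition of the jump polynomial directly. By definition, $\Delta_K^j(t) = \prod_i f_i(t)^{j_i(K)}$ with the $f_i$ irreducible, so the degree of a product formula and the multiplicativity of degree under powers give
\[
\deg \Delta_K^j(t) = \sum_i \deg\bigl(f_i(t)^{j_i(K)}\bigr) = \sum_i (\deg f_i(t)) \cdot j_i(K).
\]
This step is purely formal and requires no further input.

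Second, for the inequality $\tfrac{1}{2}\deg \Delta_K^j(t) \leq g_c(K)$, I would chain two facts already established in the excerpt. The previous proposition tells us that $\Delta_K^j(t)$ divides $\Delta_K^c(t)$, hence $\deg \Delta_K^j(t) \leq \deg \Delta_K^c(t)$. The proposition just before that asserts $\tfrac{1}{2}\deg \Delta_K^c(t) \leq g_c(K)$. Combining these two inequalities yields the desired bound. In effect the real content sits in Lemma \ref{jumplemma} and Proposition \ref{jumpprop}, which control how jumps force factors of the Alexander polynomial to survive concordance; here we only need to package those results through the intermediate concordance polynomial.

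There is no substantive obstacle to this proof: both halves reduce to citing or restating earlier propositions. The only place where one could slip is a sign or indexing ambiguity in the definition of $j_i(K)$ (it is defined via $|\tfrac{1}{2}\jump_\alpha(K)|$, which is a non-negative integer by part (a) of Lemma \ref{jumplemma}), but this only ensures that the exponents in the definition of $\Delta_K^j(t)$ are honest non-negative integers, so that the degree formula is unambiguous.
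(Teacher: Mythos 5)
Your proposal is correct and matches the paper's intent: the paper offers no explicit proof, presenting the result as an immediate consequence of the definitions and the two preceding propositions, which is exactly the chain you assemble (the formal degree computation for $\prod_i f_i(t)^{j_i(K)}$, followed by $\deg \Delta_K^j(t) \leq \deg \Delta_K^c(t)$ and $\tfrac{1}{2}\deg \Delta_K^c(t) \leq g_c(K)$). Your remark that part (a) of Lemma \ref{jumplemma} guarantees the exponents $j_i(K)$ are non-negative integers is a worthwhile clarification the paper leaves implicit.
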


$\mathbf{6_2, continued}$:  Now, let's return to the example at hand.  In this case, $\sigma_{\omega}(n6_2)$ jumps by $-2n$ at $\alpha$ (and $\overline{\alpha}$), i.e. $\jump_{\alpha}(n6_2) = -2n$.  The degree of the corresponding irreducible factor (which is in this case $\Delta_{6_2}(t)$) is 4.  Hence by Proposition \ref{degreebound}, for all $n \geq 1$, $$2n =\frac{1}{2}\deg(1-3t+3t^2 -3t^3 +t^4)^{n} \leq g_c(n6_2).$$

We will reiterate to clarify the details.  Since $\Delta_{6_2}(t)$ is the minimal polynomial for $\alpha$, we may conclude from Proposition \ref{jumpprop} that $\Delta_{n6_2}(t) = (\Delta_{6_2})^n = (\min(\alpha))^n$ is a factor of $\Delta_J(t)$ for any $J$ concordant to $n6_2$.

Now, if $\Delta_J(t)$ has $(\Delta_{6_2}(t))^n$ as a factor, we know $\deg (\Delta_J(t)) \geq 4n$ for all $n$, and all $J$ concordant to $n6_2$.  Hence $2n \leq \frac{1}{2} \deg(\Delta_J(t)) \leq g_c(n6_2)$ for all $n \geq 1$, as we saw above.

Finally, we see that
$$ 2 = \frac{\frac{1}{2} \deg (\Delta_{6_2}(t)^n)}{n} \leq \g(6_2) \leq g_c(6_2) = 2.$$
We conclude that $\g(6_2) = 2$.

\bigskip
$\mathbf{8_5}$:
We can also apply Proposition \ref{degreebound} to Alexander polynomials which are products of several irreducible factors.  In this case, to get a sharp bound we require that the signature function jump at roots of each factor of $\Delta_{8_5}(t)$.  The Alexander polynomial of $8_5$ is $\Delta_{8_5}(t) = 1-3t+ 4t^2-5t^3+ 4t^4-3t^5+ t^6 = (1-t+t^2)(1-2t+t^2-2t^3+t^4)$.  The signature functions jumps by $2$ at $\alpha$, the root of $1-t+t^2$, and also by 2 at $\beta$, the root of $1-2t+t^2-2t^3+t^4$.  Hence by a similar argument to above, applied to both factors, and we may conclude that $$3n = \frac{1}{2}[\deg(1-t+t^2)^{n} + \deg(1-3t+3t^2 -3t^3 +t^4)^{n}] \leq g_c(n6_2).$$
So we have $3 = \frac{1}{2}\deg(\Delta_{8_5}(t)) \leq \g(8_5) \leq g_c(8_5) = 3$.

We can similarly calculate the stable concordance genus of $$7_6, 8_2, 8_4, 8_6, 8_7, 8_{14}, 8_{16}.$$
In each of these cases the stable concordance genus is equal to the concordance genus.  There are four prime knots of eight or fewer crossings for which the stable concordance genus is as of yet undetermined: $7_7, 8_1, 8_{13}, 8_{21}$.

\section{Torus Knots}

The stable concordance genus is particularly interesting when we use it to examine larger collections of knots under connect sum.  Here we will look for the stable concordance genus unit ball restricted to sums of the form $xK + yJ$, with $K$, $J$ torus knots.  Having calculated the stable concordance genus of $T_{2,3} = 3_1$ and $T_{2,5} = 5_1$ in the previous section, we begin with sums of these two knots.

\bigskip
$\mathbf{\threefive}$:  The signature function of $\threefive$ jumps at $1/10$, $1/6$, and $3/10$ in $[0,1/2]$, taking on the values: $0 \in [0,1/10)$, $2y \in [1/10, 1/6)$, $2x + 2y \in [1/6, 3/10)$, and $2x + 4y \in [3/10, 1/2]$.  We will first look at the stable four-genus for this family of knots.  The signature function gives us the bounds 

\begin{align*} 	
   & \gst \geq |y| \\
   & \gst \geq |x+y| \\
   & \gst \geq |x + 2y|
\end{align*} 

Considering each of these inequalities for $\gst \leq 1$, we bound a region in the plane (this is the signature ball $B_{\sigma}$ mentioned in section 2).  We then check the corner points of this region, and see that since $\gst(T_{2,3}) = 1$, $\gst(-T_{2,3} + T_{2,5}) = 1$ and $\gst(-2T_{2,3} + T_{2,5}) = 1$.  Since $\gst(xK + yJ) = \gst(-xK -yJ)$ this is enough to determine that this region is in fact the unit ball for the stable 4--genus (Fig \ref{fig:g_stBall}).  Although it does not represent a corner point, $\gst(T_{2,5}) = 2$ as we saw earlier, which is consistent with this calculation.

\begin{figure}[h]
		\centering
		\includegraphics[width=.7\textwidth]{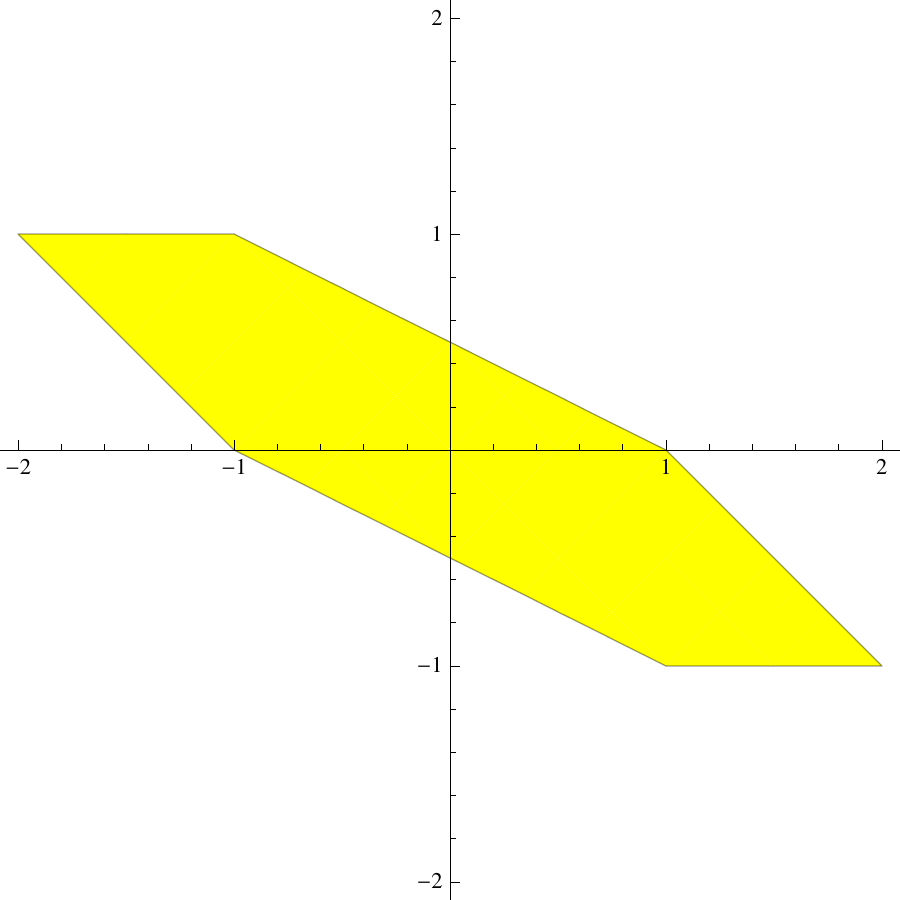}
		\caption{Stable four-genus unit ball for $\threefive$}
	\label{fig:g_stBall}
\end{figure}

To calculate the stable concordance genus unit ball, we generalize the calculation given in the previous section for the knot $6_2$.  Notice that $\Delta_{\threefive}(t) = (1-t+t^2)^{|x|}(1-t+t^2-t^3+t^4)^{|y|}$ and since $\sigma_{\omega}(t)$ jumps at the roots of each factor by $2x$ and $2y$, the jump polynomial is $\Delta_{\threefive}^j = \Delta_{\threefive}$.  
So, by Proposition \ref{degreebound}, we have $\g(\threefive) \geq |x| + 2|y|$.  We know then the unit ball for $\g(\threefive)$ is contained in the ball defined by these equations, but furthermore, $\g(T_{2,3}) = 1$, and $\g(T_{2,5}) = 2$, so by linearity this is the unit ball (Fig \ref{fig:g_stcBall}).

\begin{figure}[h]
		\centering
		\subfloat[Stable concordance genus unit ball]{\includegraphics[width=.5\textwidth]{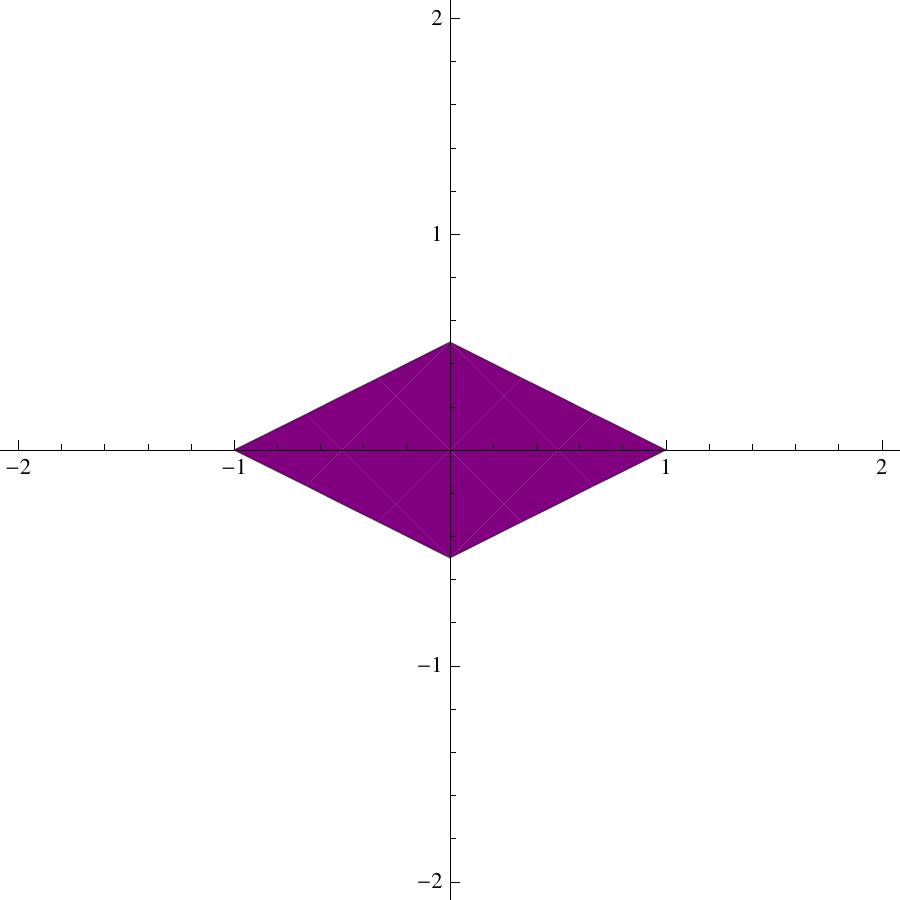}}
		\subfloat[Stable concordance genus unit ball overlaid stable four-genus unit ball]{\includegraphics[width=.5\textwidth]{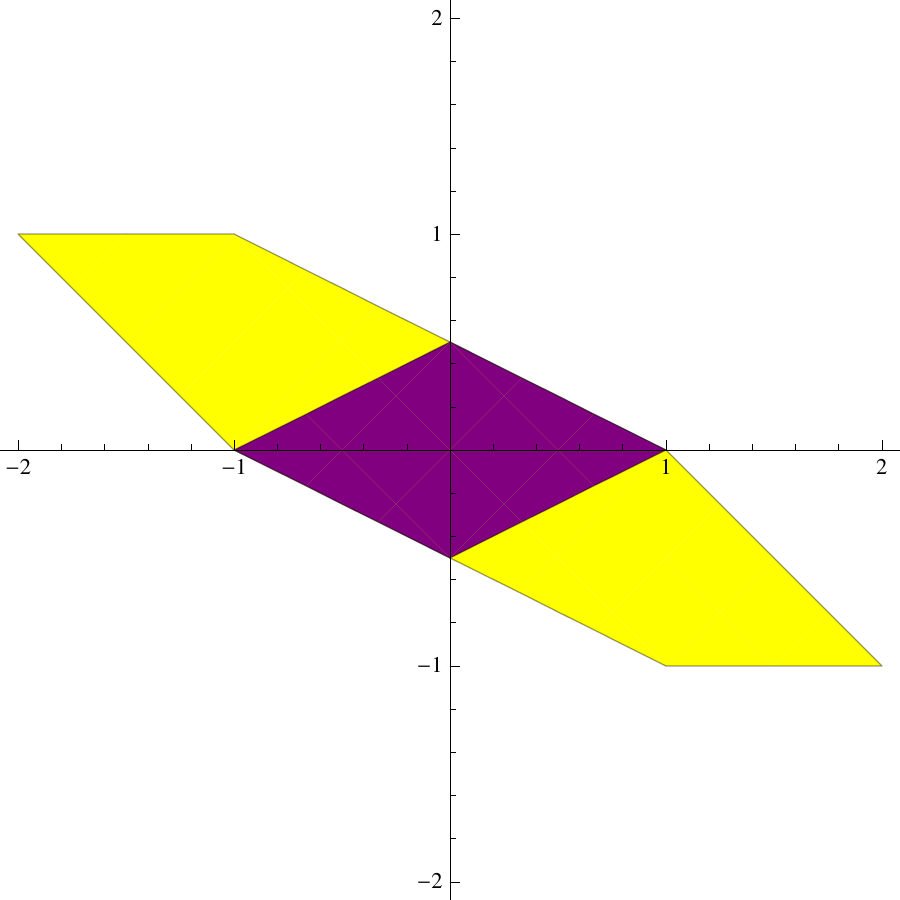}}
		\caption{The stable concordance genus unit ball for $\threefive$ is different than the stable four-genus unit ball.}
	\label{fig:g_stcBall}
\end{figure}

We observe in particular that the unit ball for the stable four-genus is different than the unit ball for the stable concordance genus.  This is the primary observation necessary to prove Theorem \ref{comparisontheorem}. 

\bigskip
$\mathbf{T_{2,n}}$:  For any torus knot of the form $T_{2,n}$, $\g(T_{2,n}) = \frac{n-1}{2}$.  For any such knot, there is a surface of genus $\frac{n-1}{2}$ whose boundary is the knot, so $\g(T_{2,n}) \leq g_3(T_{2,n}) \leq \frac{n-1}{2}$.  On the other hand, $\sigma(T_{2,n}) = \frac{n-1}{2}$, so $\g(T_{2,n}) \geq \gst(T_{2,n}) \geq \frac{n-1}{2}$.  This will assist us in a general calculation of knots of the form $xT_{2,n} + yT_{2,m}$.  We conclude that $\g(T_{2,n}) = \frac{n-1}{2}$.

\bigskip
$\mathbf{xT_{2,n} + yT_{2,m}}$:  As long as the Alexander polynomials of $T_{2,n}$ and $T_{2,m}$ have distinct factors, then the argument for $\threefive$ follows through with any family of knots of the form $xT_{2,n} + yT_{2,m}$.  We claim the following:

\begin{thm}\label{torusknotgenus}
The stable concordance genus of knots of the form $xT_{2,n} + yT_{2,m}$ is $$\frac{n-1}{2}|x| + \frac{m-1}{2}|y|$$ for any $n, m \in \textbf{Z}$ with $n < m$, $kn \neq m$.
\end{thm}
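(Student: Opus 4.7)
The plan is to sandwich $\g(xT_{2,n}+yT_{2,m})$ between matching upper and lower bounds. The upper bound is essentially free from the preceding computation: by subadditivity and multiplicativity of $\g$ (both established earlier in the paper) combined with $\g(T_{2,k}) = (k-1)/2$,
$$\g(xT_{2,n}+yT_{2,m}) \leq |x|\,\g(T_{2,n}) + |y|\,\g(T_{2,m}) = \tfrac{n-1}{2}|x| + \tfrac{m-1}{2}|y|.$$
The nontrivial work is the matching lower bound, which I would obtain by applying Proposition \ref{degreebound} to arbitrary multiples $N(xT_{2,n}+yT_{2,m})$, mimicking the argument just used for $\threefive$.

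To execute the lower bound I would first record the cyclotomic factorization of the relevant Alexander polynomials. For odd $k$, $\Delta_{T_{2,k}}(t) = (t^k+1)/(t+1) = \prod \Phi_d(t)$ where $d$ ranges over divisors of $2k$ with $d\nmid k$ and $d>2$; equivalently $d=2e$ with $1<e\mid k$. Under the hypothesis $n\nmid m$ (with $n<m$), I would verify that the index sets for $\Delta_{T_{2,n}}$ and $\Delta_{T_{2,m}}$ are disjoint, so that no irreducible factor of one Alexander polynomial coincides with any irreducible factor of the other. Second, I would show that for each $T_{2,k}$ the Tristram--Levine signature function jumps by exactly $\pm 2$ at every root of $\Delta_{T_{2,k}}$ on $S^1$, with a consistent sign; this follows from a standard Seifert-matrix computation, and the sign consistency is forced by the fact that the sum of the jumps must equal $\sigma(T_{2,k})=\pm(k-1)$, which has magnitude equal to $2$ times the number of jumps.

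With these two ingredients, additivity of $\sigma_t$ under connected sum implies that $\sigma_t\bigl(N(xT_{2,n}+yT_{2,m})\bigr)$ jumps by $\pm 2Nx$ at every root of $\Delta_{T_{2,n}}$ and by $\pm 2Ny$ at every root of $\Delta_{T_{2,m}}$, with no cancellation between the two families. Hence the jump polynomial of $NK$ (where $K=xT_{2,n}+yT_{2,m}$) is $\Delta_{T_{2,n}}^{N|x|}\,\Delta_{T_{2,m}}^{N|y|}$, of degree $N\bigl((n-1)|x|+(m-1)|y|\bigr)$. Proposition \ref{degreebound} then gives $g_c(NK) \geq \tfrac{N}{2}\bigl((n-1)|x|+(m-1)|y|\bigr)$, and dividing by $N$ and letting $N\to\infty$ produces the required lower bound.

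The main obstacle I anticipate is the first step of the lower bound, the number-theoretic claim that $n\nmid m$ forces the cyclotomic-factor sets of $\Delta_{T_{2,n}}$ and $\Delta_{T_{2,m}}$ to be disjoint. A shared factor $\Phi_{2e}$ occurs precisely when some $e>1$ divides both $n$ and $m$, i.e.\ when $\gcd(n,m)>1$, which is \emph{weaker} than the stated hypothesis $n\nmid m$. If the disjointness step fails, one must handle potentially cancelling jumps (arising when $x$ and $y$ have opposite signs and a shared $\Phi_{2e}$ factor gets jump contribution $\pm 2(x\pm y)$), and the cleanest fix would be to strengthen the hypothesis to $\gcd(n,m)=1$ or to replace the jump polynomial bound by a direct appeal to the concordance polynomial, since a shared cyclotomic factor still divides $\Delta_J(t)$ for any $J\sim K$ even when its contribution to the jump polynomial cancels.
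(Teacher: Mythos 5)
Your proposal follows essentially the same route as the paper's proof: the upper bound comes from subadditivity together with $\THg(T_{2,k})=\frac{k-1}{2}$, and the lower bound from the cyclotomic factorization $\Delta_{T_{2,k}}(t)=\prod_{e\mid k,\,e>1}\Phi_{2e}(t)$, the $\pm 2$ signature jumps at the roots, and Proposition \ref{degreebound}. The one point where you go beyond the paper is in scrutinizing the disjointness of the two cyclotomic factor sets, and your worry is well founded. The paper's proof only argues that $\Phi_{2n}(t)$ fails to divide $\Delta_{T_{2,m}}(t)$ (and vice versa) under the hypothesis $m\neq kn$, but the jump-polynomial bound needs \emph{every} irreducible factor to carry its full multiplicity, and a common factor $\Phi_{2e}(t)$ occurs whenever some $e>1$ divides $\gcd(n,m)$; for example $\Phi_6(t)$ divides both $\Delta_{T_{2,9}}(t)$ and $\Delta_{T_{2,15}}(t)$ even though $9\nmid 15$. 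Since all jumps of a positive $T_{2,k}$ have the same sign, the jump of $xT_{2,n}+yT_{2,m}$ at a root of a shared factor is $\mp 2(x+y)$, which for $x$ and $y$ of opposite sign undershoots $2(|x|+|y|)$, so Proposition \ref{degreebound} then yields a lower bound strictly below the claimed value (for $T_{2,9}-T_{2,15}$ it gives $9$ rather than $11$). This gap is present in the paper's argument exactly as in yours.

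One caution about your proposed repairs: the appeal to the concordance polynomial does not work as stated, because the only mechanism the paper provides for showing that a factor divides $\Delta_J(t)$ for every $J$ concordant to $K$ is Proposition \ref{jumpprop}, which requires a nonzero jump at a root of that factor; a shared cyclotomic factor whose jump contributions cancel therefore contributes nothing to the concordance polynomial either. The clean fixes are your first suggestion --- strengthen the hypothesis to $\gcd(n,m)=1$ --- or restrict to $x$ and $y$ of the same sign, under either of which your argument (and the paper's) goes through verbatim.
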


\begin{proof}
The Alexander polynomial for these knots is $\Delta_{T_{2,n}}(t) = \frac{(t^{2n}-1)(t-1)}{(t^2-1)(t^n-1)}$ (recall further that $\Delta_{K \# J}(t) = \Delta_K\Delta_J$).  Notice, $\Delta_{T_{2,n}}$ is a product of the cyclotomic polynomials $\Phi_i(t)$ for $i$ a factor of $2n$ other than $2$ or $n$. Since $n$ is odd, $\Phi_{2n}(t)$ is a factor of $\Delta_{T_{2,n}}(t)$.  And, if $n \neq mk$ then $\Phi_{2n}(t)$ is not a factor of $\Delta_{T_{2,m}}(t)$.  Recall that $\Phi_{2n}(t)$ is the minimal polynomial of the $2n^{th}$ primitive roots of unity, $\xi_{2n}^k$ (for $1 \leq k < n$ and $\gcd(k, n) = 1$).

For each of the primitive $2n^{th}$ roots of unity, the signature function jumps (specifically, $\jump_{\xi_{2n}}(xT_{2,n} + yT_{2,m}) = \pm 2x$ and $\jump_{\xi_{2m}}(xT_{2,n} + yT_{2,m}) = \pm 2y$).  Thus, so as long as $n$ and $m$ are have factors distinct from the other (that is, $n \neq mk$ and $m \neq nk$), then the signature function for $K$ jumps at a root of $\Delta_{T_{2,n}}(t)$ which is not a root of $\Delta_{T_{2,m}}(t)$ and vice versa.  
Applying Proposition \ref{degreebound}, we have $\g(xT_{2,n} + yT_{2,m}) \geq \frac{n-1}{2}|x| + \frac{m-1}{2}|y|$.  As computed above, $\g(T_{2,n}) = \frac{n-1}{2}$.  We conclude then that the unit ball for such knots is defined by the inequality $1 \geq  \g(xT_{2,n} + yT_{2,m}) \geq \frac{n-1}{2}|x| + \frac{m-1}{2}|y|$. Hence, more generally, $\g(xT_{2,n} + yT_{2,m}) = \frac{n-1}{2}|x| + \frac{m-1}{2}|y|$.

\end{proof}

\section{Further discussion}
We observed in the case of $\threefive$, the unit ball for the stable four-genus is not the same as that for the stable concordance genus.  A similar calculation in the more general case of $xT_{2,n}+yT_{2,m}$ allows us to construct examples that show the following theorem.  

\begin{thm} \label{comparisontheorem}
For any $j$, $k \in \mathbb{Q}$, for which $1 \leq j \leq k$, there is some $K \in \Cq$ for which $\THgst(K) = j$, $\THg(K) = k$.  Furthermore,  if $K \in \mathcal{C}$, given any $l \geq k$ then for some knot $K'$ in the concordance class of $K$, $g_3(K') = l$. 
\end{thm}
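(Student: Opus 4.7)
The plan is to realize every rational ratio $k/j \geq 1$ via a one-parameter family of rational combinations of the torus-knot differences $K_n := T_{2,2n+1} - T_{2,2n-1}$ (with $K_1 = T_{2,3}$, since $T_{2,1}$ is unknotted), and then to realize the desired three-genus values in the concordance class by connect summing with slice knots.

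For the first claim, I would first verify that $\THgst(K_n) = 1$ for every $n \geq 1$. The classical signature gives $\sigma(K_n) = -2n - (-(2n-2)) = -2$, so $\THgst(K_n) \geq \tfrac{1}{2}|\sigma(K_n)| = 1$; conversely, a single crossing change in the closed two-braid $\sigma_1^{2n+1}$ representing $T_{2,2n+1}$ yields $T_{2,2n-1}$, producing a genus-one cobordism in $B^4$ from $T_{2,2n+1}$ to $T_{2,2n-1}$ and hence $g_4(K_n) \leq 1$, which forces $\THgst(K_n) \leq 1$. Theorem \ref{torusknotgenus} gives $\THg(K_n) = (n-1) + n = 2n - 1$. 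Next, for rational $\alpha \in [0, 1]$ set $K(n, \alpha) := \alpha K_n + (1 - \alpha) K_{n+1} \in \Cq$. Additivity of $\sigma$ gives $\sigma(K(n, \alpha)) = -2$, so $\THgst(K(n, \alpha)) \geq 1$; subadditivity combined with $\THgst(K_n) = \THgst(K_{n+1}) = 1$ gives the matching upper bound, hence $\THgst(K(n, \alpha)) = 1$. Rewriting
\[
K(n, \alpha) = -\alpha\, T_{2,2n-1} + (2\alpha - 1)\, T_{2,2n+1} + (1 - \alpha)\, T_{2,2n+3},
\]
the indices $2n-1, 2n+1, 2n+3$ are pairwise coprime, so the Alexander polynomials of the three summands share no irreducible cyclotomic factor; a three-summand extension of Theorem \ref{torusknotgenus} (sketched below) then gives
\[
\THg(K(n, \alpha)) = \alpha(n-1) + |2\alpha - 1|\, n + (1 - \alpha)(n + 1),
\]
a continuous piecewise-linear function of $\alpha$ whose image over $[0, 1] \cap \mathbb{Q}$ is the set of rational points of $[n, 2n + 1]$. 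Since $\bigcup_{n \geq 1}[n, 2n + 1] = [1, \infty)$, every rational $r \geq 1$ is realized as $\THg(K(n, \alpha))$ for some $(n, \alpha)$. Given $1 \leq j \leq k$ in $\mathbb{Q}$, set $r = k/j$, pick such $(n, \alpha)$, and take $K := j\,K(n, \alpha) \in \Cq$; by multiplicativity, $\THgst(K) = j$ and $\THg(K) = k$.

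For the second claim, suppose $K \in \mathcal{C}$; then after clearing the denominator $K$ is an honest integer connect sum of the three torus knots above, and additivity of the three-genus under connect sum together with Theorem \ref{torusknotgenus} gives $g_3(K) = \THg(K) = k$. For any integer $l \geq k$, let $S$ be a slice knot with $g_3(S) = l - k$ (for example, the iterated connect sum of $l - k$ copies of the genus-one slice knot $6_1$, or the unknot if $l = k$), and put $K' = K \# S$. Since $S$ is slice, $K' \sim K$, and additivity of the three-genus yields $g_3(K') = g_3(K) + g_3(S) = k + (l - k) = l$.

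The main obstacle is justifying the three-summand extension of Theorem \ref{torusknotgenus} used to compute $\THg(K(n, \alpha))$. The proof is the same as the two-summand case: the pairwise coprimality of $2n-1, 2n+1, 2n+3$ forces the Alexander polynomials of $T_{2,2n-1}, T_{2,2n+1}, T_{2,2n+3}$ to have disjoint irreducible factors, the Tristram--Levine signature of $K(n, \alpha)$ jumps at a root of each factor by an amount equal to twice the coefficient of the corresponding summand, and so each irreducible factor enters the jump polynomial with multiplicity equal to that coefficient. Proposition \ref{degreebound} then provides a lower bound on $\THg$ which matches the upper bound coming from subadditivity and additivity of the three-genus under connect sum.
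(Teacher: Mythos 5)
Your proof is correct in its essentials, but it reaches the key construction by a genuinely different route than the paper. The paper works inside a single two--dimensional subspace $xT_{2,2n+1}+yT_{2,2m+1}$: it quotes Litherland's signature computation of the stable four--genus unit ball to locate a boundary segment $nx+my=1$, intersects it with the level set $n|x|+m|y|=k/j$ coming from Theorem \ref{torusknotgenus}, and scales by $j$. You avoid Litherland's result entirely: the elementary fact that one crossing change turns $T_{2,2n+1}$ into $T_{2,2n-1}$ gives $g_4(K_n)\leq 1$, which together with $\sigma(K_n)=-2$ forces $\THgst(K_n)=1$, and the convex combinations $\alpha K_n+(1-\alpha)K_{n+1}$ then sweep out every rational ratio in $[n,2n+1]$, hence every ratio $k/j\geq 1$ as $n$ varies. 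The price is that your elements live in a three--dimensional subspace, so you need a three--summand version of Theorem \ref{torusknotgenus}; you correctly flag this as the main obstacle, and your justification is sound: since $2n-1$, $2n+1$, $2n+3$ are pairwise coprime, each cyclotomic factor $\Phi_{2e}$ of a summand's Alexander polynomial occurs in the total Alexander polynomial with multiplicity exactly the absolute value of that summand's coefficient, the signature jump at a primitive $2e$-th root of unity equals twice that coefficient, and Propositions \ref{jumpprop} and \ref{degreebound} deliver the matching lower bound (your situation is in fact cleaner than the general hypothesis $kn\neq m$ of Theorem \ref{torusknotgenus}, since coprimality makes \emph{all} factors distinct rather than just one). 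For the three--genus claim your argument coincides with the paper's (connect--summing with copies of the slice knot $6_1$), except that you obtain $g_3(K)=\THg(K)=k$ directly from additivity of $g_3$ rather than by appealing to lowering the genus within the concordance class; this is slightly more careful, and, like the paper, you establish the final clause only in the case where the coefficients are integers so that $K$ is an honest knot.
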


\begin{proof}
We will use the fact, from Theorem \ref{torusknotgenus}, that the stable concordance genus of the sum of torus knots $xT_{2,2n+1} + yT_{2,2m+1}$ is $n|x| + m|y|$ (for $n, m \in \textbf{Z}$ with $n < m$, $k(2n+1) \neq (2m+1)$).  We also will use that the stable four genus calculation from the previous section generalizes to connect sums of knots $T_{2, n}$.  According to a calculation by Rick Litherland, for these knots the stable four-genus is determined by the signature function \cite{L:Stable4Genus}.  In particular, for $-2 - \frac{c+ 2n}{m-nc} \leq x \leq 0$ (where $c = \lfloor \frac{2m+1}{2n+1} \rfloor$) the upper boundary of the stable four-genus unit ball, restricted to the plane $xT_{2,2n+1} + yT_{2,2m+1}$, is given by the line $nx+my = 1$.  By multiplicativity, if we choose a pair $(x,y)$ on this line, which satisfies $\frac{k}{j} = n(-x) + my = n|x| + m|y|$, then for $K = jxT_{2,2n+1}+jyT_{2,2m+1}$, we have that $\g(K) = k$ and $\gst(K) = j$.  A simple linear algebra computation shows us that $(x,y)$ should be $(\frac{1}{2n}(1-\frac{k}{j}), \frac{1}{2m}(1+\frac{k}{j}))$.  Then by choosing $n$ and $m$ sufficiently large, so that $-2 - \frac{c+ 2n}{m-nc} \leq x \leq 0$ (note that $x$ is already negative, and by simplifying the left inequality, we see that such an $n$ and $m$ can always be chosen), we guarantee that such an $(x,y)$ produces the desired values of $\g$ and $\gst$.

Finally, consider the three-genus.  Suppose K as calculated above is a knot (in particular $x$ and $y$ are integers).  It may be that $g_3(K) \neq l$.  If necessary, we may lower the three-genus to $k$, by definition of $\g(K)$ (without changing $\g$ or $\gst$).  Let $K' \sim K$ be such that $g_3(K') = \g(K) = k$.  Let $J$ be a slice knot with $g_3(J) = 1$ (for instance $6_1$).  The three-genus is additive, so $g_3(xJ) = x$.  Then if $K'' = K' \# (l-k) J$, we have $g_3(K'') = g_3(K') + g_3((l-k)J) = k+ (l-k) = l$.  Since $J$ was slice, we still have $\g(K'') = k$ and $\gst(K'') = j$.  
\end{proof}

\begin{note}
In the proof above, we have not required (and in fact usually may not assume) that $x$ and $y$ be integer values.  Hence, we have only completed the proof in $\Cq$ and only claim it to be true for the stable invariants, not the concordance genus and four genus (since these are not defined for $\Cq$).  A more detailed examination of the stable four genus may yield different examples in which we can demand integer values of our coefficients.  In this case, we can refine the result to give an actual knot.
\end{note}

\begin{conj}
For any $j$, $k$, and $l \in \mathbb{Z}$, for which $1 \leq j \leq k \leq l$, there is some knot K for which $g_4(K) = j$, $g_c(K) = k$, and $g_3(K) = l$. 
\end{conj}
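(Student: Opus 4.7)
The plan is to upgrade Theorem \ref{comparisontheorem} from the rational setting $\Cq$ to actual knots, then inflate $g_3$ using slice knots. First I would reduce to finding a knot $K_0$ with $g_4(K_0) = j$ and $g_c(K_0) = k$: given such a $K_0$, the definition of $g_c$ provides a concordant knot of three-genus exactly $k$, and then connect-summing $\ell - k$ copies of the slice knot $6_1$ (which has $g_3 = 1$) boosts $g_3$ to $\ell$ without changing $g_4$ or $g_c$, exactly as at the end of the proof of Theorem \ref{comparisontheorem}. So the heart of the conjecture is realizing any pair $1 \leq j \leq k$ as $(g_4(K), g_c(K))$ for an actual knot $K$.

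For that core step I would try the integer-coefficient torus knot sums $K_{a,b} := aT_{2,2n+1} \# bT_{2,2m+1}$ with $a, b \in \mathbb{Z}$, $n < m$, and $(2n+1) \nmid (2m+1)$.  By Theorem \ref{torusknotgenus} together with additivity of $g_3$, the concordance genus is pinned down to $g_c(K_{a,b}) = n|a| + m|b|$. Following the algebra in the proof of Theorem \ref{comparisontheorem}, the assignment $(a, b) = \bigl(\tfrac{j-k}{2n}, \tfrac{j+k}{2m}\bigr)$ places $K_{a,b}$ on the appropriate facet of the stable four-genus unit ball; selecting $(n, m)$ so that $2n \mid k-j$ and $2m \mid k+j$ (for example $2n = k-j$, $2m = k+j$ when $k$ and $j$ have the same parity) makes $a$ and $b$ integers, with $\g(K_{a,b}) = k$ and $\gst(K_{a,b}) = j$. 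When parity forbids this direct choice, three-summand torus knot combinations, twist knots, or algebraic knots would need to supplement the family.

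The main obstacle is passing from stable invariants to unstable ones. The equality $\g(K_{a,b}) = k$ already forces $g_c(K_{a,b}) = k$ by the $g_3$-additivity argument, but $\gst(K_{a,b}) = j$ only yields $g_4(K_{a,b}) \geq j$; a matching upper bound must come from an explicit four-dimensional cobordism. Standard genus-lowering cobordisms between torus knots and their connect sums should in principle realize the signature bound, but verifying that the resulting surface attains genus exactly $j$, not more, for the specific $(a, b, n, m)$ above is delicate. Additionally, the parity constraints from Lemma \ref{jumplemma}(c) may rule out some $(j, k)$ via the torus-knot family, forcing reliance on supplementary families. Exhibiting the explicit four-dimensional surface witnessing $g_4(K_{a,b}) \leq j$ is the principal difficulty; the parity and integrality issues are more tractable bookkeeping.
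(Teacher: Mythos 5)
This statement is labeled as a \emph{Conjecture} in the paper: the author explicitly leaves it open (``A proof of this conjecture would confirm\dots''), so there is no proof of record to compare yours against. Your reduction is the right first move and matches the paper's own maneuver at the end of the proof of Theorem \ref{comparisontheorem}: once you have a knot $K_0$ with $g_4(K_0)=j$ and $g_c(K_0)=k$, pass to a concordant representative realizing $g_3 = k$ and pad with $l-k$ copies of the slice knot $6_1$. Likewise your deduction that $g_c(K_{a,b}) = n|a|+m|b|$ is valid, since $\g \leq g_c \leq g_3$ and the outer two quantities agree here.

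However, the core step is not closed, and the gaps you flag yourself are genuine, not bookkeeping. First, the stable invariant only gives $g_4(K_{a,b}) \geq \gst(K_{a,b}) = j$; no construction of a genus-$j$ surface is supplied. (In your same-parity specialization $2n = k-j$, $2m = k+j$, which forces $a=-1$, $b=1$, the standard genus-$(m-n)$ cobordism between $T_{2,2n+1}$ and $T_{2,2m+1}$ would in fact finish that case --- but you do not invoke it, and even there Theorem \ref{torusknotgenus} requires $(2n+1) \nmid (2m+1)$, which your choice violates for infinitely many pairs, e.g.\ $j=3$, $k=5$ gives $2n+1 = 3$ dividing $2m+1 = 9$.) Second, when $j \not\equiv k \pmod 2$ the obstruction is absolute: $k-j$ is odd, so no even integer $2n$ divides it and the two-parameter family $aT_{2,2n+1} \# bT_{2,2m+1}$ cannot yield integer coefficients at all; ``three-summand combinations, twist knots, or algebraic knots would need to supplement the family'' is a research program, not an argument. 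As written, the proposal is a plausible strategy outline for an open problem, with its two essential steps --- the four-genus upper bound and the off-parity realization --- left unproven.
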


It has been previously observed in work by Casson and also in work of Nakanishi \cite{N:Unknotting} that the gap between $g_4(K)$ and $g_c(K)$ can be made arbitrarily large.  A proof of this conjecture would confirm that we can additionally construct $K$ to have a given value for $g_c(K)$.  

There are many other open questions raised by this invariant.  We'll conclude with listing several of them.
\begin{itemize}
\item In the examples in the previous section 4 we observed that $\gst(K) = g_4(K)$ and $\g(K) = g_c(K)$.  We saw in section 3 that this is not always the case.  It is unknown whether this gap can be made arbitrarily large.
\item Livingston gives an example in \cite{L:Stable4Genus} of a knot with rational (non-integer) stable four genus.  On the other hand, there are no known knots with rational (non-integer) stable concordance genus.
\item In all of the examples calculated in this paper, if $\g(K) = k$, then for some integer multiple of $K$, $\frac{g_c(nK)}{n} = k$.  Of course, it is not necessarily true that a limit of a sequence must appear in that sequence.  It is an open question whether there is a knot $K$ for which $\g(K) = k$, but there is no multiple $n$ of $K$ such that $\frac{g_c(nK)}{n} = k$.
\item A special case of the previous question: Does there exist a knot $K$ which is not finite order in the concordance group but $\g(K) = 0$?  It is clearly true that if $K$ is torsion in the concordance group, then $\g(K) = 0$.  If the converse is true, it could prove to be a very useful tool to identify torsion in the concordance group.  It is known that there is two-torsion in the concordance group, but it is still unknown whether there is any other torsion in the concordance group.
\item We observed that if $g_4(K) = 0$ then $g_c(K) = 0$.  Does the same hold for $\gst$ and $\g$?  This is in fact related to the previous question.  If it is true that $\g(K) = 0$ only when $K$ is torsion in the concordance group, and similarly that $\gst(K) = 0$ only when $K$ is torsion in the concordance group, then it must also be true that whenever $\gst(K) = 0$ then $\g(K) = 0$ as well.  Otherwise, there may be a distinction between the stable invariants which cannot arise for the classical invariants.
\end{itemize}

\bibliographystyle{hacm}
\bibliography{MKKrefs}

\end{document}